\documentclass[11pt,a4paper]{amsart}
\usepackage[T1]{fontenc}
\usepackage[utf8]{inputenc}
\usepackage{lmodern}
\usepackage{amsmath,amssymb,amsthm,mathtools}
\usepackage[margin=29mm]{geometry}
\usepackage{microtype}
\usepackage{color}
\usepackage[authoryear,round]{natbib}
\setcitestyle{aysep={},notesep={, }}
\usepackage[hidelinks]{hyperref}
\newtheorem{theorem}{Theorem}[section]

\newtheorem{lemma}[theorem]{Lemma}
\newtheorem{corollary}[theorem]{Corollary}
\theoremstyle{definition}

\theoremstyle{remark}

\newcommand{\Pcal}{\mathcal P}
\newcommand{\Ucal}{\mathcal U}

\newcommand{\Vcal}{\mathcal V}
\newcommand{\Ncal}{\mathcal N}
\newcommand{\R}{\mathbb R}

\DeclareMathOperator*{\argmin}{arg\,min}

\numberwithin{equation}{section}

\newcommand\commentout[1]{}

\hypersetup{pdfauthor={Stephan Huckemann and Alexander Lytchak},
 pdfsubject={Frechet means and a topological obstruction to stable uniqueness},
 pdfkeywords={Frechet mean, Wasserstein space, reach, Riemannian manifold, Alexandrov space, nonuniqueness}}
\theoremstyle{plain}
\newtheorem{maintheorem}{Theorem}

\title[Dense uniqueness and dense nonuniqueness]{Dense uniqueness and dense nonuniqueness\ of Fr\'echet means}
\hypersetup{pdftitle={Dense uniqueness and dense nonuniqueness of Frechet means}}
\author{Stephan F. Huckemann}
\author{Alexander Lytchak}
\date{\today} 
\subjclass[2020]{Primary 53C20; Secondary 62R30, 60D05}  
\keywords{Fr\'echet mean, Wasserstein space, reach, Alexandrov space, total variation}

\begin{document}
\begin{abstract}
From previous results it follows that probability distributions on metric spaces featuring unique Fr\'echet means are dense among measures admitting means in both the quadratic Wasserstein metric and the total variation metric. Additionally, we show that the converse also holds on complete finite-dimensional noncontractible Alexandrov spaces with curvatures bounded from below, which encompass compact manifolds without boundaries and nonmanifold shape spaces: Probability distributions featuring nonunique Fr\'echet means are also dense in both the Wasserstein metric and the total variation metric.  Moreover, in either metric, no nonempty open set of probability measures admits a continuous selection of means. This sharpens a previous result on zero reach of the Dirac embedding.
Our argument combines cut-locus avoidance for atoms with a topological obstruction. 
For Riemannian manifolds, all of our conclusions hold for all exponents $1<p<\infty$, also.
\end{abstract}
\maketitle
\markboth{Dense uniqueness and dense nonuniqueness}{Dense uniqueness and dense nonuniqueness}

\section{Introduction}\label{sec:intro}

Let $(Q,d)$ be a metric space satisfying the \emph{Heine-Borel} property (every closed and bounded set is compact) and denote by $\Pcal(Q)$ the Borel probability measures on $Q$. For $\mu\in\Pcal(Q)$, its \emph{Fr\'echet function} is 
\begin{eqnarray*}
 Q\to [0,\infty], ~x \mapsto F_\mu(x)&:=&\int_Q d(x,z)^2\,d\mu(z),
\end{eqnarray*}
If $Q$ is not bounded we restrict ourselves to 
$$ \Pcal_2(Q) := \left\{\mu \in \Pcal(Q): F_\mu(x) < \infty\mbox{ for all } x\in Q\right\},$$
noting that $F_\mu(x) < \infty$ for $\mu \in \Pcal(Q)$ and at least one $x\in Q$ implies that $F_\mu(x) < \infty$ for all $x\in Q$. 
Further, we equip $\Pcal_2(Q)$ with the quadratic Wasserstein distance
\[
 W_2(\mu,\nu):=\sqrt{\inf_{\pi\in\Pi(\mu,\nu)}\int_{Q\times Q}d(z,w)^2\,d\pi(z,w)}\,,\quad \mu,\nu \in \Pcal_2(Q)\,,
\]
where $\Pi(\mu,\nu)$ is the set of couplings (e.g., \cite{PZ20} for this and further details below).
The \emph{Dirac embedding} $\delta: Q \hookrightarrow \Pcal_2(Q)$ identifies $x\in Q$ with the point measure $\delta_x$. By construction, it is isometric with respect to $W_2$. Since the only coupling of $\mu \in \Pcal_2(Q)$ with a Dirac measure is their product, 
$$W_2(\mu,\delta_x)^2=F_\mu(x)\,.$$
Moreover, for $\mu \in \Pcal_2(Q)$, 
\begin{eqnarray*}B(\mu)&:=&\argmin_{x\in Q}F_\mu(x)\end{eqnarray*}
is the set of its \emph{Fr\'echet means}, introduced by \cite{Fre48}. If $F_\mu(x_n) \to \inf_{x\in Q} F_\mu(x)$ for some $x_n \in Q$, $x_n$ has a cluster point $x^*\in Q$ with $F_\mu(x^*)=\inf_{x\in Q} F_\mu(x)$, due to the triangle inequality,
$$ d(y,x_n) = W_2(\delta_y,\delta_{x_n})\leq \sqrt{F_\mu(y)} +  \sqrt{F_\mu(x_n)}\,,$$
(for arbitrary $y\in Q$) and the Heine-Borel property. Hence $B(\mu)\neq \emptyset$ for all $\mu \in \Pcal_2(Q)$. 
Thus, $B(\mu)$ is the inverse image under $\delta$  of the nearest-point projection of $\mu$ to $\delta(Q)$.

An abundance of statistical applications (for a recent overview see \cite{MD21}) assumes uniqueness of $B(\mu)$, in order to provide a single data descriptor. An important question in this context is whether this descriptor remains unique under contamination, i.e., under perturbation within a neighborhood of the underlying distribution. 
We show that for a large family of statistically relevant sample spaces, for instance, compact manifolds without boundary (such as tori, spheres, real and complex projective spaces, e.g., \cite{MJ00}), noncontractible manifolds and \emph{shape spaces} \citep{DM16}, the answer is negative everywhere.

As the proof relies on noncontractibility and completeness, and as it utilizes previous results under curvature bounds from below (preventing forking geodesics, for this reason we need to exclude BHV spaces by \cite{BHV01}, say), we formulate our result in the following general setup
\begin{equation}\label{eq:setup}
 \left\{\begin{array}{l}
(Q,d)\mbox{\it ~is a noncontractible complete finite-dimensional}\\ \mbox{\it Alexandrov space of curvature bounded from below.}
 \end{array}\right. 
\end{equation}
This characterizes a finite-dimensional complete length space whose geodesic triangles satisfy a lower curvature comparison with the corresponding triangles in a constant-curvature model space; equivalently, its triangles are locally no thinner than the model triangles \citep[Def. 4.6.2]{BBI22}.  In particular, irrespective of noncontractibility, it then satisfies the Heine-Borel property \citep[Theorems 2.5.22 and 10.8.1]{BBI22}. 

On $\Pcal(Q)$ we also consider the \emph{total variation metric}
\[
d_{\mathrm{TV}}(\mu,\nu):=\sup_A|\mu(A)-\nu(A)|, \mbox{where $A$ ranges over all Borel sets of $Q$.}
\]
On a space of diameter $0<D<\infty$ (this is the case for Alexandrov spaces with positive lower curvature bound, cf. \citet[Theorem 10.4.1]{BBI22})  it induces a finer topology due to the well known \citep[Theorem 6.15]{V08}
\begin{eqnarray}\label{eq:W2_leq_TV} W_2(\mu,\nu)^2 &\leq& D^2 d_{\mathrm{TV}}(\mu,\nu)~~\mbox{ for all } \mu, \nu \in \Pcal(Q)\,.\end{eqnarray} 

Defining
\[
 \Ucal:=\{\mu\in \Pcal_2(Q):B(\mu)\text{ is a singleton}\},
 \qquad \Ncal:=\Pcal_2(Q)\setminus\Ucal\,,
\]
our first theorem asserts the coexistence of dense uniqueness and
dense instability.

\begin{maintheorem}\label{thm:main}
Under Setup (\ref{eq:setup}), the uniqueness locus $\Ucal$ and the nonuniqueness locus $\Ncal$ are both dense in $\Pcal_2(Q)$, both with respect to the quadratic Wasserstein metric and the total variation metric. Further, no nonempty open subset $\Vcal$ of $\Pcal_2(Q)$ admits a \emph{Fr\'echet mean map} $s:\Vcal\to Q$ with $s(\mu)\in B(\mu)$ that is continuous in either metric.
\end{maintheorem}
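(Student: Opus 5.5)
The plan has three parts: density of $\Ucal$, non-existence of continuous mean maps, and density of $\Ncal$ deduced from the second part.

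\emph{Density of $\Ucal$.} Take $\mu\in\Pcal_2(Q)$ and a minimizer $x\in B(\mu)$. Set $\mu_t:=(1-t)\mu+t\delta_x$. Then
\[
F_{\mu_t}(y)=(1-t)F_\mu(y)+t\,d(x,y)^2 .
\]
Both terms are minimized at $y=x$, and the second term is minimized only there. Hence $B(\mu_t)=\{x\}$ for every $t\in(0,1]$. As $t\to 0$ we have $d_{\mathrm{TV}}(\mu_t,\mu)\le t\to 0$. Also $W_2(\mu_t,\mu)^2\le t\,F_\mu(x)\to 0$, using the coupling that keeps the mass $(1-t)\mu$ in place and transports the remaining mass $t\mu$ to $x$. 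So $\Ucal$ is dense in both metrics, with no use of Setup (\ref{eq:setup}) beyond Heine--Borel.

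\emph{No continuous mean map.} Suppose $s:\Vcal\to Q$ is continuous, with $s(\mu)\in B(\mu)$, on a nonempty open set $\Vcal$. Fix $\mu\in\Vcal$. The idea is to embed a copy of $Q$ into $\Vcal$ via a map $\Phi$ with $s\circ\Phi$ homotopic to the identity, and then contract that copy inside $\Vcal$; this would make $Q$ contractible, contradicting noncontractibility.

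\begin{enumerate}
\item \emph{The map $\Phi$.} For small $t>0$ define $\Phi_t:Q\to\Pcal_2(Q)$ by $\Phi_t(y):=(1-t)\mu+t\delta_y$. This is continuous in $W_2$, with $W_2(\Phi_t(y),\Phi_t(y'))\le\sqrt t\,d(y,y')$. It is not continuous in $d_{\mathrm{TV}}$, so for that metric I would replace $\delta_y$ by a small uniform ball measure around $y$, or argue via (\ref{eq:W2_leq_TV}) on bounded pieces. $Q$ may be noncompact, so I would work on a compact set $K\subset Q$ carrying a nontrivial homotopy class, for example a compact image of a sphere on which the inclusion is not null-homotopic; a minimal noncontractibility witness would be used here.
\item \emph{Staying inside $\Vcal$.} For $t$ small, $\Phi_t(K)\subset\Vcal$, and $\Phi_t(K)$ contracts within $\Vcal$ through $\Phi_{\tau t}$ as $\tau\to 0$ to the single point $\mu$. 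Thus $s\circ\Phi_t|_K$ is null-homotopic.
\item \emph{The hard step.} I must show that $s\circ\Phi_t|_K$ is homotopic to the inclusion $K\hookrightarrow Q$. This would come from cut-locus avoidance for atoms. I would use the cited results under the lower curvature bound: a Fréchet mean $m$ of $\Phi_t(y)$ satisfies that $y$ is not in the ``cut locus'' of $m$, in the sense that geodesics from $m$ to atoms do not fork. The first-order condition then pulls the mean towards $y$.
\item \emph{Making the homotopy.} I would not rely on $t$ small. Instead I would also vary the weight: consider $(1-\lambda)\mu+\lambda\delta_y$ as $\lambda$ runs from $t$ to $1$. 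At $\lambda=1$ the mean is exactly $y$. The issue is that intermediate measures leave $\Vcal$.
\end{enumerate}

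Because of this, I would change the construction and choose the base point inside $\Vcal$ cleverly. Replace $\mu$ by $\mu'=(1-t)\mu+t\delta_x$, which is still in $\Vcal$ and has unique mean $x$, and only perturb by tiny atoms at $y$ near a loop. The cleanest version uses $\Phi(y)=(1-\varepsilon)\nu+\varepsilon\delta_y$ with $\nu$ chosen in $\Vcal$ to be very concentrated, e.g.\ $\nu$ close to a Dirac measure. I expect this step to be the main obstacle: producing inside an arbitrary open $\Vcal$ a family whose means follow a noncontractible cycle while the family itself contracts in $\Vcal$. The cut-locus avoidance lemma would control where means of measures with a heavy atom can lie, forcing them into a small ball around the atom. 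This yields the homotopy via short unique geodesics in the small ball.

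\emph{Density of $\Ncal$.} If $\Ncal$ missed an open set $\Vcal$, then $\Vcal\subset\Ucal$ and $s(\mu):=$ the unique element of $B(\mu)$ would be a mean map on $\Vcal$. This map is continuous: if $\mu_n\to\mu$ in $W_2$, then $F_{\mu_n}\to F_\mu$ locally uniformly, the minimizers stay bounded by the triangle-inequality bound from the introduction, and every cluster point lies in $B(\mu)$. For $d_{\mathrm{TV}}$ convergence on unbounded $Q$, I would restrict to $\Pcal_2$-neighbourhoods where second moments are controlled. This contradicts the previous part, so $\Ncal$ is dense.
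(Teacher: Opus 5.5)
Your argument for density of $\Ucal$ is correct and is the paper's (Theorem~\ref{thm:pre-Ucal-dense}). The genuine gap is the step you yourself call the main obstacle: showing that $s\circ\Phi_t|_K$ is homotopic to the inclusion. You already have the right input, cut-locus avoidance for atoms (Lemma~\ref{lem:atom}). It says that \emph{every} mean of $(1-t)\mu+t\delta_y$, for \emph{every} $t>0$, is joined to $y$ by a unique minimizing geodesic.

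What is missing is the paper's Lemma~\ref{lem:homotopy}. Let $f_0,f_1:K\to Q$ be continuous on a compact $K$, with $f_0(z),f_1(z)$ joined by a unique minimizing geodesic $\gamma_z$ for each $z$. The $\gamma_z$ are then uniformly Lipschitz curves in one compact ball. Arzel\`a--Ascoli plus uniqueness makes $z\mapsto\gamma_z$ continuous, so $H(z,\tau)=\gamma_z(\tau)$ is a homotopy, however long the geodesics are. No first-order condition, heavy atom or small ball is needed.

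Your proposed workaround would in fact fail. An arbitrary open $\Vcal$ need not contain any measure close to a Dirac measure. Lemma~\ref{lem:atom} says nothing about the mean lying near the atom, and for small weight the means stay close to $B(\mu)$ rather than to $y$. With Lemma~\ref{lem:homotopy} inserted, your contraction $\tau\mapsto\Phi_{\tau t}$ inside a $W_2$-ball contained in $\Vcal$ does finish the $W_2$ case. That is a legitimate variant of the paper: it handles an arbitrary continuous selection directly, instead of localizing the means of a measure with unique mean in a contractible neighbourhood $C$. You still need a non-null-homotopic $u:\mathbb S^k\to Q$. This is not automatic for noncontractible spaces; it comes from local contractibility of Alexandrov spaces plus a Whitehead-type theorem.

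For total variation your scheme breaks in both directions. Since $d_{\mathrm{TV}}(\delta_y,\delta_{y'})=1$ for $y\neq y'$, the map $y\mapsto\Phi_t(y)$ is not TV-continuous. So $s\circ\Phi_t$ need not be continuous when $s$ is only TV-continuous. Smoothing $\delta_y$ into a ball measure destroys the atom that Lemma~\ref{lem:atom} needs, and (\ref{eq:W2_leq_TV}) bounds $W_2$ by TV, which is the wrong direction here. Conversely, deducing TV-density of $\Ncal$ from the absence of TV-continuous selections requires $b$ to be TV-continuous on $\Ucal$. That is false for unbounded $Q$: on $\mathbb S^1\times\R$ the measures $(1-\frac{1}{n})\delta_{(\theta,0)}+\frac{1}{n}\delta_{(\theta,n)}$ converge in total variation to $\delta_{(\theta,0)}$, yet have unique mean $(\theta,1)$. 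TV-neighbourhoods do not control second moments.

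The paper avoids both problems by reversing your order. It first proves density of $\Ncal$ via Theorem~\ref{thm:contamination}. That homotopy argument uses only the $W_2$-continuity of $y\mapsto\mu_{\varepsilon,y}$ and of $b$ (Lemma~\ref{lem:unique-means}). Meanwhile $d_{\mathrm{TV}}(\mu_{\varepsilon,y},\mu)\le\varepsilon$ and $W_2^2(\mu_{\varepsilon,y},\mu)\le\varepsilon F_\mu(y)$ hold uniformly for $y$ in the compact $K$. Then every nonempty open $\Vcal$, in either metric, contains some $\mu\in\Ncal$ with two distinct means $x\neq y$. Your own $\Ucal$-construction gives $\mu_{\varepsilon,x}\to\mu$ and $\mu_{\varepsilon,y}\to\mu$ in both metrics, with unique means $x$ and $y$ respectively. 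Hence no selection can be continuous at $\mu$.
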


The proof of Theorem \ref{thm:main} in Section \ref{sec:genericity} gives a
perturbation result in which the size of the perturbation may be prescribed. To this end, for $\mu \in \Pcal_2(Q)$, $0\leq \varepsilon \leq 1$ and $y\in Q$ introduce the \emph{perturbed 
measure}
$$ \mu_{\varepsilon,y} := (1-\varepsilon)\mu+\varepsilon\delta_y\,.$$
\begin{maintheorem}\label{thm:contamination}
Under Setup (\ref{eq:setup}), there is a compact set $K\subseteq Q$ such that for every $\mu \in \Ucal$ and every sufficiently small $\varepsilon>0$, there exists $y\in K$ such that $\mu_{\varepsilon,y} \in \Ncal$.
\end{maintheorem}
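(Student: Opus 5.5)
We argue by contradiction, using the topological obstruction coming from noncontractibility. Fix $\mu\in\Ucal$ with $B(\mu)=\{m\}$, and consider the family $y\mapsto \mu_{\varepsilon,y}$, $y\in K$. Suppose that for some small $\varepsilon>0$ every $\mu_{\varepsilon,y}$, $y\in K$, lies in $\Ucal$. Then $s_\varepsilon(y)$, the unique Fréchet mean of $\mu_{\varepsilon,y}$, is well defined on $K$. We will derive a contradiction by showing that $s_\varepsilon$ is continuous, is homotopic to the identity on a suitable noncontractible $K$, and is nearly constant.

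\emph{Step 1: continuity.} The map $y\mapsto\mu_{\varepsilon,y}$ is $W_2$-continuous, since $W_2(\mu_{\varepsilon,y},\mu_{\varepsilon,y'})\le \sqrt{\varepsilon}\,d(y,y')$. The Fréchet functions $F_{\mu_{\varepsilon,y}}(x)=(1-\varepsilon)F_\mu(x)+\varepsilon d(x,y)^2$ depend continuously on $(x,y)$, locally uniformly. By the Heine--Borel property and the usual argmin-continuity argument, a uniquely attained minimizer depends continuously on the parameter. Hence $s_\varepsilon:K\to Q$ is continuous.

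\emph{Step 2: smallness.} Since $F_{\mu_{\varepsilon,y}}\to F_\mu$ uniformly on bounded sets as $\varepsilon\to0$, uniformly in $y\in K$, and $m$ is the unique minimizer of $F_\mu$, we get $\sup_{y\in K} d(s_\varepsilon(y),m)\to0$. So for small $\varepsilon$ the image $s_\varepsilon(K)$ lies in a small ball around $m$. In a finite-dimensional Alexandrov space, small balls are contractible: by Perelman's conical neighborhood theorem, or by using gradient flows of distance functions, a small ball around $m$ contracts within itself. Hence $s_\varepsilon$ is null-homotopic as a map $K\to Q$.

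\emph{Step 3: choice of $K$ and the homotopy to the identity.} This is the main obstacle. Noncontractibility of $Q$ should give a compact set $K\subseteq Q$, independent of $\mu$, whose inclusion $K\hookrightarrow Q$ is not null-homotopic. Since $Q$ is a complete finite-dimensional Alexandrov space, it is homotopy equivalent to a CW complex, and local contractibility is available. If $Q$ is compact, take $K=Q$. Otherwise, by Whitehead's theorem, a nontrivial homotopy group $\pi_k(Q)$ is represented by a sphere, and $K$ is its compact image. I would try to obtain the required homotopy $\iota\simeq s_\varepsilon$ via $s_t(y)$, the mean of $(1-t)\mu+t\delta_y$, but this works only for $t$ near $1$. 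Instead, I would use a homotopy through measures in which $\mu$ is replaced by $\mu_\lambda=(1-\lambda)\mu+\lambda\delta_m$. The difficulty is that uniqueness along this path fails, so this route needs care. A cleaner plan uses cut-locus avoidance for atoms, the tool the abstract mentions: for $y$ an atom of weight $\varepsilon$, every mean $x$ of $\mu_{\varepsilon,y}$ lies outside the cut locus of $y$ in the lower-curvature sense, namely there is a unique geodesic from $x$ to $y$ that extends a little beyond $y$. Given unique means for all $y\in K$, the map $(y,\tau)\mapsto$ (point at fraction $\tau$ along the unique geodesic from $s_\varepsilon(y)$ to $y$) is then a homotopy from $s_\varepsilon$ to $\iota$. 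Its continuity follows from uniqueness of geodesics together with the compactness argument of Step 1.

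\emph{Conclusion.} Combining Steps 2 and 3 makes $\iota:K\hookrightarrow Q$ null-homotopic, which contradicts the choice of $K$. Therefore some $y\in K$ has $\mu_{\varepsilon,y}\in\Ncal$. The delicate point is justifying the geodesic uniqueness and continuity in Step 3 in the Alexandrov setting, where I would rely on the previously cited cut-locus results.
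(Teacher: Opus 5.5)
Your final plan is correct and is essentially the paper's proof. It confines all means of $\mu_{\varepsilon,y}$, $y\in K$, to a contractible neighborhood of the unique mean and uses continuity of the unique-mean map; it then uses the unique minimizing geodesic from each atom $y$ to the mean of $\mu_{\varepsilon,y}$ (Lemma~\ref{lem:atom}) to build the homotopy of Lemma~\ref{lem:homotopy} between the mean map and the inclusion of $K=u(\mathbb S^k)$, and working with that inclusion rather than with $u$ itself is equivalent. From cut-locus avoidance you need only this uniqueness: your extra claim that the geodesic extends a little beyond $y$ is unnecessary and can fail in Alexandrov spaces with boundary, and the abandoned detours via the mean of $(1-t)\mu+t\delta_y$ or via $\mu_\lambda$ can simply be dropped.
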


Theorem~\ref{thm:contamination} asserts that uniqueness can be destroyed by a
single gross-error component of arbitrarily small positive weight.

A converse of Theorem \ref{thm:contamination} has been derived earlier on arbitrary metric spaces.

\begin{maintheorem}[{\citet[Lemma 1]{HEH26}}]\label{thm:pre-Ucal-dense} Let $(Q,d)$ be a metric space. For every $\mu\in \Ncal$, every $y\in B(\mu)$ and every $0<\varepsilon\leq 1$, we have $\mu_{\varepsilon,y} \in \Ucal$.
\end{maintheorem}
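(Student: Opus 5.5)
We have to show that $\mu_{\varepsilon,y}\in\Ucal$, where $\mu\in\Pcal_2(Q)$ (the hypothesis $\mu\in\Ncal$ plays no role), $y\in B(\mu)$ and $0<\varepsilon\le1$. The plan is to write the Fréchet function of the perturbed measure explicitly and compare it termwise with that of $\mu$. By linearity of the integral in the measure,
\[
F_{\mu_{\varepsilon,y}}(x)=(1-\varepsilon)F_\mu(x)+\varepsilon\, d(x,y)^2 \qquad\text{for all } x\in Q .
\]
In particular $\mu_{\varepsilon,y}\in\Pcal_2(Q)$, because both summands are finite. By \cite{Fre48}'s definition and the existence argument in the introduction, $B(\mu_{\varepsilon,y})$ is nonempty. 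So it suffices to prove that $y$ is a strict global minimizer of $F_{\mu_{\varepsilon,y}}$.

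Take any $x\in Q$ with $x\neq y$. Since $y\in B(\mu)$, we have $F_\mu(x)\ge F_\mu(y)$, hence $(1-\varepsilon)F_\mu(x)\ge(1-\varepsilon)F_\mu(y)$, using $1-\varepsilon\ge 0$. Since $\varepsilon>0$ and $d(x,y)>0$, we also have $\varepsilon\, d(x,y)^2>0=\varepsilon\, d(y,y)^2$. Adding the two inequalities gives
\[
F_{\mu_{\varepsilon,y}}(x)>(1-\varepsilon)F_\mu(y)=F_{\mu_{\varepsilon,y}}(y).
\]
Therefore $B(\mu_{\varepsilon,y})=\{y\}$, so $\mu_{\varepsilon,y}\in\Ucal$. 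The case $\varepsilon=1$ is included: then $\mu_{1,y}=\delta_y$, and trivially $B(\delta_y)=\{y\}$.

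There is no genuine obstacle here. The only point needing care is the finiteness of $F_\mu(y)$, so that the strict inequality is between real numbers and not between two infinite values; this holds because $\mu\in\Pcal_2(Q)$. No curvature or completeness assumption is used, which matches the statement's validity on arbitrary metric spaces. The Heine–Borel property is not needed either, since we exhibit the unique minimizer $y$ directly rather than invoking the existence argument.
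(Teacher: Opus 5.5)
Your proof is correct. The paper gives no argument of its own here and only cites \citet[Lemma 1]{HEH26}; your decomposition $F_{\mu_{\varepsilon,y}}=(1-\varepsilon)F_\mu+\varepsilon\,d(\cdot,y)^2$, with the strictly positive term $\varepsilon\,d(x,y)^2$ for $x\neq y$, is the standard argument. It also yields the sharper conclusion $B(\mu_{\varepsilon,y})=\{y\}$, which is what the paper actually uses later in the proof of Theorem~\ref{thm:main}. The one blemish is your early appeal to the Heine--Borel existence argument for nonemptiness of $B(\mu_{\varepsilon,y})$: it is unavailable on a general metric space, but also unnecessary, since exhibiting $y$ as a strict global minimizer already gives $B(\mu_{\varepsilon,y})=\{y\}$, as you note at the end.
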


The proof of Theorem \ref{thm:contamination} in Section \ref{sec:genericity} is geometric:  Cut-locus avoidance of a measure implies that every one of its atoms is joined to each of its means by a unique minimizing
geodesic (\citet[Theorem~7.6]{LH25}, see also the preceding work of
\citet{LB14}).  If locally, all one-point perturbations had unique means,
moving the perturbation atom over a noncontractible compact set would allow means in a contractible neighborhood, and thus to construct a null-homotopic map that is homotopic to a non-null-homotopic map.  
Section~\ref{sec:extensions} gives examples and extends our results to other exponents $2\neq p \in (1,\infty)$ for Riemannian manifolds.

Topological obstructions to continuous mean maps were studied by
\cite{EGH62}:  A connected finite polyhedron $Q$
admitting a continuous symmetric map $m:Q^N\to Q$ with
$m(x,\ldots,x)=x$, for some $N\ge2$, is contractible.  If all uniform
$N$-point empirical measures on $Q$ had unique means, their means would define
such a map.  This is a global obstruction. Our
result is local: it excludes continuous
selections on every nonempty open subset of $\Pcal_2(Q)$.

\cite{CCS24} study the reach of the Dirac embedding into Wasserstein and Orlicz--Wasserstein spaces. In particular, for a closed Riemannian manifold $M$, they prove that the reach of a Dirac measure is zero, for every exponent greater than one \cite[Corollary 4.6]{CCS24}. For $p=2$, the reach of $\delta_x$ is the supremum of the radii $r$ for which the $W_2$-ball of radius $r$ about $\delta_x$ is contained in $\Ucal$. Their result therefore gives
\[
 \delta(M)\subset\overline{\Ncal}^{\,W_2},
\]
whereas Theorem~\ref{thm:main} gives
\[
 \overline{\Ncal}^{\,W_2}=\overline{\Ncal}^{\,\mathrm{TV}}=\Pcal(M).
\]
Thus, zero reach concerns nonuniqueness near the embedded manifold, while our conclusion concerns every probability measure. Their two-point construction starts at a Dirac measure; Theorem~\ref{thm:contamination} starts at an arbitrary measure with a unique mean and permits every sufficiently small perturbation weight.

There are various results on uniqueness of Fr\'echet means in the literature.  First,
uniqueness holds on Riemannian manifolds when the support is contained in a sufficiently small ball,
bounded in terms of curvature and injectivity radius; see
\cite{Afs11}.  Such support conditions are useful for concentrated
distributions, but confinement to a fixed proper subset has empty interior in
both the Wasserstein and total variation topologies: an arbitrarily small
amount of mass may be placed outside of it.

Second, in view of parametric statistics, distributions on symmetric spaces, featuring a density with respect to Riemannian volume that depends only on the distance to a central point that is nonincreasing and strictly decreasing on a set of positive volume have this point as their unique Fr\'echet mean. This follows at once from an argument by \cite{Le98} for \emph{planar shape spaces} (which are complex projective spaces).
In our contribution we do not extend this result to other distributions. In particular, we do not give an answer to questions of the following kind: Which distributional families or which parameter choices for certain parametric families guarantee unique means? Such questions are left for future research.

Third, \citet[Theorem~2.1]{AM14} prove that the uniform empirical measure associated
with almost every configuration in $M^N$, with respect to product Riemannian
volume, has a unique Fr\'echet mean.  Thus, almost-sure uniqueness at fixed
sample size $N\in \mathbb N$ and topological density of nonuniqueness in the full distribution
space occur simultaneously. In contrast, Corollary~\ref{cor:finite} shows that finitely supported measures with nonunique means are dense in general noncontractible complete finite-dimensional Alexandrov spaces $Q$ with curvature bounded from below.  

Statistical consequences of nonuniqueness have been
quantified by \cite{HEH26}, who derive uniform
estimation lower bounds near distributions with several means. \cite{Elt20} provides statistical testing for uniqueness of means.

\section{Continuity, atoms, and the topological obstruction}\label{sec:atoms}

For all of the following, $(Q,d)$ is a complete finite-dimensional Alexandrov space of curvature bounded from below. 
Recall from above that it then satisfies the Heine-Borel property.

\begin{lemma}\label{lem:unique-means}
 The unique mean map 
 $$ b:\Ucal \to Q,\mu\mapsto \bar \mu\mbox{ where }\{\bar \mu\} = B(\mu)$$
 is continuous in the $W_2$ metric. 
\end{lemma}

\begin{proof}
For every $\nu \in \Pcal_2(Q)$, $\bar \nu \in B(\nu)$ and $y\in Q$, we have
$$ d(y,\bar \nu) = W_2(\delta_y,\delta_{\bar \nu}) \leq \sqrt{F_\nu(y)} + \sqrt{F_\nu(\bar \nu)} \leq 2\sqrt{F_\nu(y)}\,,$$
due to the triangle inequality. Hence,  for $\Pcal_2(Q) \ni \mu_n \to \mu \in \Pcal_2(Q)$ with $B(\mu_n) = \{\bar \mu_n\}, B(\mu) = \{\bar \mu\}$, we have that all $\bar\mu_n$ lie in a closed and bounded subset. Due to the Heine-Borel property, $\bar\mu_n$ has then a cluster point $\bar \mu^*$ with $F_\mu(\bar \mu^*) = F_\mu(\bar\mu)$. The latter follows from the triangle inequality
$$ \left|\sqrt{F_\nu(x)} - \sqrt{F_\tau(x)}\right| \leq W_2(\nu,\tau)\,,$$
for all $\nu,\tau \in \Pcal_2(Q)$, $x\in Q$, and applying
$$ \min_{x\in Q} \sqrt{F_\nu(x)} \leq  \min_{x\in Q} \sqrt{F_\tau(x)} + W_2(\nu,\tau)$$
for suitable $\nu,\tau\in \Pcal_2(Q)$, to
obtain
\begin{eqnarray*}
 F_{\mu_n}(\bar \mu_n) ~=~ \min_{x\in Q} F_{\mu_n}(x) &\to& \min_{x\in Q} F_{\mu}(x) ~=~ F_{\mu}(\bar \mu)\,.\end{eqnarray*}
Due to uniqueness, $\bar \mu^* = \bar \mu$, and hence $\bar\mu_n \to \bar \mu$, yielding continuity of the unique mean map, $b:\Ucal \to Q$ in $W_2$.
\end{proof}

The following is a generalization of \citet[Theorem~1]{LB14}.

\begin{lemma}[{\citet[Theorem~7.6]{LH25}}]\label{lem:atom}
If $x$ is a minimizer of $F_\mu$ for $\mu \in \Pcal_2(Q)$ with an atom $\mu(\{y\})>0$ at $y\in Q$, then $x$ and $y$ are joined by a unique minimizing geodesic.
\end{lemma}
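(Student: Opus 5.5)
The statement is Lemma \ref{lem:atom}: if $x$ minimizes $F_\mu$ and $\mu$ has an atom at $y$, then $x$ and $y$ are joined by a unique minimizing geodesic. The idea is a first-variation argument at the minimizer $x$. I would separate the contribution of the atom from the rest of the measure. Write $\mu=m\delta_y+(1-m)\nu$ with $m=\mu(\{y\})>0$, so that
\[
F_\mu = m\,d(\cdot,y)^2+(1-m)F_\nu .
\]
I would use the first variation formula in Alexandrov spaces with a lower curvature bound. For a direction $\xi$ in the space of directions $\Sigma_x$, the directional derivative of $d(\cdot,z)$ at $x\neq z$ is $-\cos\angle(\xi,\Uparrow_x^z)$. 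Here $\Uparrow_x^z$ is the set of initial directions of minimizing geodesics from $x$ to $z$, and the angle is the minimal angle to that set. Hence
\[
D_\xi\, d(\cdot,z)^2 = -2d(x,z)\max_{\eta\in\Uparrow_x^z}\langle \xi,\eta\rangle .
\]
The case $x=y$ is trivial, since the constant geodesic is unique. So assume $x\neq y$ and suppose there are two distinct minimizing geodesics from $x$ to $y$, with directions $\eta_1\neq\eta_2$ in $\Sigma_x$.

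Minimality of $x$ gives $D_\xi F_\mu\ge 0$ for every $\xi$. Differentiation under the integral is justified by the uniform Lipschitz bounds on bounded sets together with dominated convergence, using $\mu\in\Pcal_2(Q)$. This yields, for all $\xi$,
\[
m\,d(x,y)\max_{\eta\in\Uparrow_x^y}\langle\xi,\eta\rangle+(1-m)\int d(x,z)\max_{\eta\in\Uparrow_x^z}\langle\xi,\eta\rangle\,d\nu(z)\le 0 .
\]
Both sides are evaluated in the tangent cone $T_x$. Choosing $\xi=\eta_1$ and $\xi=\eta_2$, the atom term is at least $m\,d(x,y)$ in each case. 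So the remaining integral must be at most $-\frac{m}{1-m}d(x,y)$ in both directions $\eta_1$ and $\eta_2$.

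The main obstacle is the next step: deriving a contradiction, i.e.\ showing the sublinear function $\xi\mapsto\int\max\langle\xi,\eta\rangle$ cannot be that negative simultaneously at two distinct directions while the atom's term is a maximum over a set containing both. The plan is to exploit that $D_\xi F_\mu$ is the sum of a strictly \emph{superadditive} contribution from the atom and the rest. The rest, $G(\xi)=\int\max_\eta\langle\xi,\eta\rangle$, is a supremum of linear-type functions and hence convex/subadditive along tangent-cone geodesics.

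Concretely, I would consider the midpoint direction $\xi_0$ between $\eta_1$ and $\eta_2$ in $T_x$. This uses that $\Sigma_x$ is itself an Alexandrov space of curvature $\ge1$, so such directions exist and angles behave by comparison. I would show that $\langle\xi_0,\eta_i\rangle$ stays bounded below by $\cos(\angle(\eta_1,\eta_2)/2)>0$ in the scaled sense. I would also show that the second-order expansion of $d(\cdot,y)^2$ along $\xi_0$ gains strictly from branching, because the two geodesics give a ``corner''. Equivalently, I would run the standard argument of \citet[Theorem~1]{LB14}: pushing $x$ slightly along one geodesic toward $y$ decreases $d(\cdot,y)^2$ at first order. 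Meanwhile the non-differentiability of $d(\cdot,y)^2$ at $x$ in the presence of two geodesics produces a concave kink. That kink is incompatible with a minimum of $F_\mu$ once $(1-m)F_\nu$ is semiconcave with a controlled upper bound, and curvature $\ge\kappa$ makes each $d(\cdot,z)^2$ semiconcave with uniform constants on bounded sets.

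Thus the key chain is the following. First, $F_\nu$ is semiconcave near $x$. Second, $d(\cdot,y)^2$ has a strictly concave kink at $x$, so its differential is not \emph{linear}. Third, a minimum of a semiconcave function forces its differential to be $\ge0$ and also linear-like, namely $D_\xi+D_{-\xi}\le 0$. Combining these, $D_\xi F_\mu+D_{\xi'}F_\mu<0$ for suitable opposite-type directions, which is a contradiction.
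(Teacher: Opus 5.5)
The paper gives no proof of this lemma; it imports it from \citet[Theorem~7.6]{LH25}, which generalizes the Riemannian result of \citet[Theorem~1]{LB14}. Your first-variation setup and the bounds $G(\eta_i)\le-\frac{m}{1-m}d(x,y)$ are correct. On a Riemannian manifold your final ``key chain'' also closes once made explicit. Take $\xi=\eta_1$, $\xi'=-\eta_1$ and $\theta=\angle(\eta_1,\eta_2)>0$. Then the atom contributes at most $-2m\,d(x,y)(1-\cos\theta)$ to $D_{\eta_1}F_\mu+D_{-\eta_1}F_\mu$, and the $\nu$-part contributes at most $0$.

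\textbf{The gap.} The lemma is stated, and used, on Alexandrov spaces, and there the decisive step breaks down. $T_x$ is only a Euclidean cone over $\Sigma_x$, and at boundary or cone points there is no $-\xi$ at all. So ``$D_\xi+D_{-\xi}\le 0$'' and ``the differential is not linear'' have no content there, and you never say what the ``opposite-type directions'' are or why they exist. Invoking the argument of \citet{LB14} does not help, since it is Riemannian. This is not a side case. On $\mathbb S^2/\mathbb Z_k$, $k\ge2$ (noncontractible, curvature $\ge1$), the two cone points have spaces of directions that are circles of length $2\pi/k<2\pi$. They are joined by a whole circle of minimizing geodesics, so excluding a mean at one cone point with an atom at the other is exactly what the lemma must do there. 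Similar singular strata occur in the shape spaces of Theorem~\ref{thm:shape-spaces}.

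The midpoint/second-order plan does not fill this hole. Both $\xi\mapsto D_\xi d(\cdot,y)^2$ and $\xi\mapsto D_\xi F_\nu$ are concave on $T_x$. Along the bisector $\xi_0$, concavity therefore only yields further lower bounds on $D_{\xi_0}F_\nu$, and the first-order conditions are consistent there. The contradiction is of first order and lies on the far side of $\eta_1$, not between $\eta_1$ and $\eta_2$.

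\textbf{The fix.} The missing ingredient is a polar vector: some $\eta_1^*\in T_x$ with $\langle\eta_1,w\rangle+\langle\eta_1^*,w\rangle\ge0$ for all $w\in T_x$. For instance, let $q$ be an interior point of the first geodesic, so $\Uparrow_x^q=\{\eta_1\}$ by non-branching, and put $\eta_1^*=\nabla_x d_q$. The defining property of the gradient gives $-\langle w,\eta_1\rangle=d_xd_q(w)\le\langle\nabla_xd_q,w\rangle$. With this, your own first-variation formula finishes the proof, and semiconcavity of $F_\nu$ is not even needed:
\[
D_{\eta_1}F_\mu+D_{\eta_1^*}F_\mu=-2\int d(x,z)\Bigl(\max_{\eta\in\Uparrow_x^z}\langle\eta_1,\eta\rangle+\max_{\eta\in\Uparrow_x^z}\langle\eta_1^*,\eta\rangle\Bigr)\,d\mu(z)\le-2m\,d(x,y)(1-\cos\theta)<0 .
\]
Each bracket is nonnegative, as you can check at any single $\eta\in\Uparrow_x^z$. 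For $z=y$ the bracket is at least $1+\langle\eta_1^*,\eta_2\rangle\ge1-\cos\theta$. At a minimizer, $D_\xi F_\mu\ge0$ for all $\xi\in T_x$, because geodesic directions are dense and the differential is Lipschitz, so this is a contradiction. Finally, $\theta>0$ holds because two minimizing geodesics from $x$ making angle zero coincide, by hinge comparison.
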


We now formulate and prove the central topological observation.

\begin{lemma}\label{lem:homotopy}
Let $K$ be a compact metric space and let $f_0,f_1:K\to Q$ be continuous. If $f_0(z)$ and $f_1(z)$ are joined by a unique minimizing geodesic for every $z\in K$, then $f_0$ and $f_1$ are homotopic.
\end{lemma}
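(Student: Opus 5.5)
The homotopy will be the straight-line one along geodesics. For $z\in K$ let $\gamma_z:[0,1]\to Q$ be the unique minimizing geodesic from $f_0(z)$ to $f_1(z)$, parametrized proportionally to arc length, so $\gamma_z(0)=f_0(z)$, $\gamma_z(1)=f_1(z)$ and $d(\gamma_z(s),\gamma_z(t))=|s-t|\,d(f_0(z),f_1(z))$. Set
\[
H:K\times[0,1]\to Q,\qquad H(z,t):=\gamma_z(t).
\]
Then $H(\cdot,0)=f_0$ and $H(\cdot,1)=f_1$. It only remains to show that $H$ is continuous.

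Continuity will follow from a compactness argument that uses uniqueness of the geodesics. Take $(z_n,t_n)\to(z,t)$ in $K\times[0,1]$. Since $f_0,f_1$ are continuous, the lengths $L_n:=d(f_0(z_n),f_1(z_n))$ converge to $L:=d(f_0(z),f_1(z))$ and are bounded. The curves $\gamma_{z_n}$ are therefore uniformly Lipschitz, with constant $\sup_n L_n$. They also lie in a bounded set, since $\gamma_{z_n}([0,1])$ is contained in the ball of radius $L_n$ about $f_0(z_n)$ and $f_0(K)$ is compact. By the Heine--Borel property this set has compact closure, so Arzel\`a--Ascoli applies. Every subsequence of $(\gamma_{z_n})$ thus has a further subsequence converging uniformly to some curve $\sigma:[0,1]\to Q$.

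Next I identify the limit. Passing to the limit in $d(\gamma_{z_n}(s),\gamma_{z_n}(t))=|s-t|L_n$ gives $d(\sigma(s),\sigma(t))=|s-t|L$. Moreover $\sigma(0)=f_0(z)$ and $\sigma(1)=f_1(z)$. Hence $\sigma$ is a minimizing geodesic from $f_0(z)$ to $f_1(z)$ with constant-speed parametrization. By the uniqueness hypothesis, $\sigma=\gamma_z$. Because every subsequence has a sub-subsequence converging to the same limit, $\gamma_{z_n}\to\gamma_z$ uniformly on $[0,1]$. Then
\[
d(H(z_n,t_n),H(z,t))\le \sup_s d(\gamma_{z_n}(s),\gamma_z(s))+d(\gamma_z(t_n),\gamma_z(t))\to0,
\]
so $H$ is continuous.

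The main point requiring care is this continuity step: without uniqueness, a limit of minimizing geodesics need not be the chosen one. The argument above is precisely what handles that, using only that uniform limits of constant-speed minimizers are minimizers, and the Heine--Borel property for precompactness. No curvature assumption is needed beyond the Heine--Borel property, which holds by the setup. Compactness of $K$ is not strictly needed for sequential continuity, since boundedness comes from convergence of the $z_n$, but it is harmless.
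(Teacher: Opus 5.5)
Your proof is correct and follows essentially the same route as the paper: the geodesic homotopy $H(z,t)=\gamma_z(t)$, with continuity obtained from the Heine--Borel property, Arzel\`a--Ascoli, and uniqueness through a sub-subsequence argument. The one minor difference is that you prove uniform convergence $\gamma_{z_n}\to\gamma_z$ and get joint continuity directly, whereas the paper combines continuity of $z\mapsto\gamma_z(t)$ for each fixed $t$ with the uniform Lipschitz bound in $t$.
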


\begin{proof}
For every $z \in K$ let $\gamma_z:[0,1]\to Q$ be the unique constant speed geodesic joining $f_0(z)$ and $f_1(z)$. 
Since $K$ is compact there is $L< \infty$ such that
$$ \max_{z \in  K} d\left(f_0(z),f_1(z)\right) \leq L\,.$$
Hence for all $z\in K$,
$$ d(\gamma_z(s),\gamma_z(t)) = |s-t|  d\left(f_0(z),f_1(z)\right) \leq L|s-t|\,,$$
yielding that the $\gamma_z$, $z\in K$, are equicontinuous. Since $K$ is compact, there is $R>0$ and $x\in Q$ such that $f_0(K)\subset\{y\in Q: d(x,y)\leq R\}$, yielding that
$$ \gamma_z(t) \in \{y\in Q: d(x,y)\leq R+L\}\mbox{ for all }z\in K,t\in [0,1]\,.$$
Due to the Heine-Borel property, the above r.h.s. is a compact space, and since the $\gamma_z$ ($z\in K$) are equicontinuous, by the Arzel\`a-Ascoli theorem, whenever $K\ni z_n \to z \in K$, there is a subsequence $j_k$ such that $\gamma_{z_{j_k}}$ converges to some continuous curve $\eta: [0,1]\to Q$ joining  $f_0(z)=\eta(0)$ with $f_1(z)=\eta(1)$. As a limit of constant speed minimizing geodesics,  this curve is also a constant speed minimizing geodesic, and since there is a unique constant speed minimizing geodesic $\gamma_z: [0,1]\to Q$ joining $f_0(z)=\gamma_z(0)$ with $f_1(z)=\gamma_z(1)$, $\eta = \gamma_z$. If there were another subsequence not converging to $\gamma_z$, there would be, again by the Arzel\`a-Ascoli theorem, a subsequence of this subsequence that would converge to some constant speed minimizing geodesic, which, by uniqueness, is again $\gamma_z$. Thus $K \to Q, z\mapsto \gamma_z(t)$ is uniformly continuous for every $t\in [0,1]$.

In consequence, we have shown that  
$$ H(z,t) := \gamma_z(t)\mbox{ for }z\in K,t\in [0,1]$$
is a homotopy from $f_0(z)=H(z,0)$ to $H(z,1) =f_1(z)$, since $H(z,\cdot) = \gamma_z$ is $L$-Lipschitz continuous for all $z\in K$ (see above), and so is $H(\cdot,t)$ for all $t\in [0,1]$, as we just showed.
\end{proof}

\section{Perturbation and genericity}\label{sec:genericity}

\begin{proof}[Proof of Theorem~\ref{thm:contamination}]
\emph{Uniform localization.}
Every point in $Q$  has a contractible neighborhood \citep[Theorem 3.1]{MY19}. As $Q$ is non-contractible by hypothesis, there exists $k\in \mathbb N$ and a non-null-homotopic map $u:\mathbb S^k\to Q$ \citep[Theorem 9.2]{Du58}.
Let $u$ be such a map and set $K:=\mathbb{S}^k$.

Suppose now that $B(\mu)=\{\bar \mu\}$. Choose a relatively compact contractible open neighborhood $C$ of $\bar \mu$. Since $d(x_n,\bar \mu)\to \infty$ implies that $F_\mu(x_n)\to \infty$ for any sequence $x_n \in Q$, the Heine-Borel property and uniqueness give
\[
 a:=\inf_{x\notin C}\bigl(F_\mu(x)-F_\mu(\bar \mu)\bigr)>0.
\]
With $R:=\max_{y\in K}d(\bar \mu,y)$, $0<\varepsilon<a/(a+R^2)$, $y\in K$, consider $\mu_{\varepsilon,y}=(1-\varepsilon)\mu+\varepsilon\delta_y$. Since
\[
 F_{\mu_{\varepsilon,y}}(x)-F_{\mu_{\varepsilon,y}}(\bar \mu)
 \ge(1-\varepsilon)a-\varepsilon R^2>0\,,
\]
for all $x\notin C$, we have $B(\mu_{\varepsilon,y})\subset C$ for every $y\in K$.

\emph{The hypothetical mean map.}
If all these mean sets were singletons $\bar \mu_{\varepsilon,y}$, they would define a ``unique-mean'' map
$f:K\to C, y \mapsto \mu_{\varepsilon,y} \mapsto \bar \mu_{\varepsilon,y}$ which is continuous, since the first mapping is continuous due to 
\[
 W_2(\mu_{\varepsilon,y},\mu_{\varepsilon,z})
 \le\sqrt\varepsilon\,d(y,z)\,,
\]
and the second mapping is a restriction of $b$ which is continuous, due to Lemma  \ref{lem:unique-means}.

\emph{The homotopy contradiction.}
Since $\mu_{\varepsilon,y}(\{y\})\ge\varepsilon$,
Lemma~\ref{lem:atom} gives a unique minimizing geodesic from $y$ to $f(y)$.
With the above constructed $K$, Lemma~\ref{lem:homotopy} makes $f_1 = u$ homotopic to $f_2 = f\circ u$, leading to the contradiction that a non-null-homotopic map is homotopic to a map into a contractible set $C$.
\end{proof}

Recall that a \emph{closed manifold} is a compact manifold without boundary.
Since every closed manifold that is more than one point is not contractible \citep[Chapter 2, \S~4, Exercise 6]{GP74}, we have at once:

\begin{corollary}\label{cor:thmB-4-compact-manifolds}
 Let $M$ be a closed Riemannian manifold of dimension at least one and $\mu \in \Pcal(M)$ with $\{\bar \mu\} = B(\mu)$. Then for every sufficiently small $\varepsilon > 0$ there is $y\in M$ such that $B(\mu_{\varepsilon,y})$ contains at least two points.
\end{corollary}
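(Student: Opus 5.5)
The corollary follows by checking that a closed Riemannian manifold $M$ of dimension at least one satisfies Setup (\ref{eq:setup}) and then applying Theorem~\ref{thm:contamination}.

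\emph{Checking the setup.} The Riemannian distance makes $M$ a complete length space by Hopf--Rinow, since $M$ is compact. It is finite dimensional as a manifold. Its sectional curvature is continuous on the compact Grassmann bundle, so it is bounded below by some $\kappa\in\R$. Toponogov's comparison theorem then shows that $M$ is an Alexandrov space of curvature $\ge\kappa$. Finally, $M$ has more than one point because its dimension is at least one, so by the cited exercise in \cite{GP74} it is noncontractible. Hence Setup (\ref{eq:setup}) holds.

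\emph{Measures and applying the theorem.} Because $M$ is compact, its diameter is finite, so $F_\mu(x)\le \mathrm{diam}(M)^2<\infty$ for every $\mu\in\Pcal(M)$. Thus $\Pcal(M)=\Pcal_2(M)$, and the hypothesis $B(\mu)=\{\bar\mu\}$ means exactly $\mu\in\Ucal$. Theorem~\ref{thm:contamination} then provides a compact $K\subseteq M$ such that, for all sufficiently small $\varepsilon>0$, some $y\in K\subseteq M$ has $\mu_{\varepsilon,y}\in\Ncal$. By definition of $\Ncal$, $B(\mu_{\varepsilon,y})$ is not a singleton. It is also nonempty by the Heine--Borel argument in the introduction, so it contains at least two points.

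There is no real obstacle here. The only step that needs care is the identification of a Riemannian manifold with a lower sectional curvature bound as an Alexandrov space, which is the standard Toponogov theorem. If more precision is wanted, the explicit threshold from the proof, $\varepsilon<a/(a+R^2)$, already gives the meaning of ``sufficiently small.''
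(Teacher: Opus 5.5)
Your proposal is correct and follows the paper's route: the paper also obtains the corollary at once from Theorem~\ref{thm:contamination}, noting via \cite{GP74} that a closed manifold with more than one point is noncontractible. Your extra checks (Toponogov for the Alexandrov structure, $\Pcal(M)=\Pcal_2(M)$ by compactness, and nonemptiness of $B$) make explicit what the paper leaves implicit.
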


\begin{proof}[Proof of Theorem~\ref{thm:main}]
For all $\mu, \nu \in \Pcal_2(Q)$, $y\in Q$ and $0<\varepsilon<1$  we have
\begin{eqnarray*}
 d_{\mathrm{TV}}(\mu_{\varepsilon,y},\mu)\le\varepsilon\,,&& W^2_2(\mu_{\varepsilon,y},\mu)\le \varepsilon\, F_\mu(y)\,,
\end{eqnarray*}
where the second estimate uses the coupling that leaves $(1-\varepsilon)\mu$ fixed and transports the remaining mass to $y$. In conjunction with Theorem \ref{thm:contamination} this yields that $\Ncal$ is dense in both metrics, and with Theorem \ref{thm:pre-Ucal-dense} that $\Ucal$ is dense in both metrics.

Moreover, if $x,y\in B(\mu)$ are distinct, the measures $\mu_{\varepsilon,x}$ and $\mu_{\varepsilon,y}$ approach $\mu$ in either metric and have unique means $x$ and $y$. Thus, a selection on any neighborhood of $\mu$ cannot be continuous there in either metric.
\end{proof}

Here is a contrast to the Arnaudon-Miclo theorem \citep{AM14} that sample Fr\'echet means of measures absolutely continuous with respect to Riemannian volume are a.s. unique.

\begin{corollary}\label{cor:finite}
Let $Q$ be a noncontractible complete finite-dimensional Alexandrov space. Then finitely supported measures with nonunique means are dense in $\Pcal_2(Q)$ with respect to $W_2$.
\end{corollary}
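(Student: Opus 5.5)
The plan is to combine the density of finitely supported measures in $(\Pcal_2(Q),W_2)$ with Theorem~\ref{thm:contamination}, using Theorem~\ref{thm:pre-Ucal-dense} to reach a unique-mean starting point. Note that the corollary omits the curvature lower bound. I read it under Setup~(\ref{eq:setup}), as in the standing assumption of Section~\ref{sec:atoms}, since Lemma~\ref{lem:atom} and the Heine--Borel property are both needed.

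Fix $\mu\in\Pcal_2(Q)$ and $\eta>0$. First, I approximate $\mu$ in $W_2$ by a finitely supported $\nu$ with $W_2(\mu,\nu)<\eta/3$. Since $Q$ is separable (it is proper) and $\mu$ has finite second moment, this is standard: truncate to a large ball $\bar B(x_0,r)$, moving the mass outside it to $x_0$. The cost is $\int_{d(x_0,z)>r}d(x_0,z)^2\,d\mu(z)\to0$. Then partition the compact ball into finitely many Borel pieces of small diameter and collapse each piece to one of its points. Alternatively, I can simply cite \citet[Theorem~6.18]{V08}.

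Second, I make the mean unique while keeping finite support. If $\nu\in\Ncal$, pick $x\in B(\nu)$ and set $\nu':=\nu_{\varepsilon,x}$. By Theorem~\ref{thm:pre-Ucal-dense}, $\nu'\in\Ucal$, and $\nu'$ is still finitely supported. Its distance to $\nu$ satisfies $W_2(\nu',\nu)^2\le\varepsilon F_\nu(x)$, which is below $(\eta/3)^2$ for small $\varepsilon$. If $\nu\in\Ucal$ already, I take $\nu'=\nu$.

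Third, I apply Theorem~\ref{thm:contamination} to $\nu'\in\Ucal$. With the compact set $K$ from that theorem, for every sufficiently small $\varepsilon'>0$ there is $y\in K$ with $\nu'_{\varepsilon',y}\in\Ncal$. This measure has support contained in $\operatorname{supp}\nu'\cup\{y\}$, so it is finitely supported. Moreover,
\[
W_2(\nu'_{\varepsilon',y},\nu')^2\le\varepsilon'\sup_{y\in K}F_{\nu'}(y),
\]
and the right-hand side is finite because $K$ is compact and $F_{\nu'}$ is continuous. Choosing $\varepsilon'$ small makes this distance less than $\eta/3$, and the triangle inequality gives a finitely supported measure in $\Ncal$ within $\eta$ of $\mu$.

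I do not expect a genuine obstacle here. The only points needing care are the standard density of finitely supported measures in $W_2$ and the check that every perturbation preserves finite support. Both are routine, since Theorems~\ref{thm:pre-Ucal-dense} and~\ref{thm:contamination} only add a single atom.
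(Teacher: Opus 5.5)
Your argument is correct and is essentially the paper's proof: approximate by finitely supported measures via \citet[Theorem~6.18]{V08}, then apply Theorem~\ref{thm:contamination}, which adds only a single atom. Your uniform bound $\varepsilon'\sup_{K}F_{\nu'}$ over the compact $K$ is the right way to control the $W_2$-distance. Your second step is superfluous, though. If the finitely supported approximant $\nu$ already lies in $\Ncal$, it is itself the desired measure. So, as in the paper, Theorem~\ref{thm:contamination} need only be applied to approximants in $\Ucal$, and Theorem~\ref{thm:pre-Ucal-dense} is not needed.
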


\begin{proof}
By \citet[Theorem 6.18]{V08} the subspace of finite discrete measures is dense in $\Pcal_2(Q)$. Applying Theorem~\ref{thm:contamination} to those with a unique mean results in adding one atom, which preserves finite support.
\end{proof}

\section{Interpretation, examples, and extensions}\label{sec:extensions}

\subsection{Spheres and intervals}
The following is the spherical case of the two-point construction in \cite{CCS24}. On the unit sphere $\mathbb{S}^n$, $n\ge1$, $y\in \mathbb{S}^n$, let $\nu_\varepsilon=(1-\varepsilon)\delta_y+\varepsilon\delta_{-y}$, where $0<\varepsilon<1$. For $r=d(y,x)$,
\[
 F_{\nu_\varepsilon}(x)
 =(1-\varepsilon)r^2+\varepsilon(\pi-r)^2
 =(r-\varepsilon\pi)^2+\varepsilon(1-\varepsilon)\pi^2.
\]
Thus
\[
 B(\nu_\varepsilon)=\{x:d(y,x)=\varepsilon\pi\},
 \qquad W_2(\nu_\varepsilon,\delta_y)=\pi\sqrt\varepsilon.
\]
The mean set consists of two points for $n=1$ and is a sphere of codimension one for $n\ge2$.

This example illustrates population instability, while there is no claim that the resulting mean
sets have a uniformly positive diameter, nor that nonunique empirical means
occur with positive probability for fixed sample size.

Our assertions fail for compact manifolds with boundary in general. On $Q=[0,1]$, setting $m_\mu=\int z\,d\mu(z)$ gives
\[
 F_\mu(x)=(x-m_\mu)^2+\int(z-m_\mu)^2\,d\mu(z).
\]
Hence every measure has the unique mean $m_\mu$, and projection from $\Pcal([0,1])$ to its Dirac measures is single-valued everywhere. 

Note that $[0,\pi]$ can be viewed as the quotient of $\mathbb S^2$ modulo $\mathbb S^1$ (rotations about a fixed axis), a ``toy'' shape space. Beginning with two-dimensional configurations, however, the ``real'' Kendall's shape spaces are noncontractible, see below. 

More generally, in a complete CAT(0) space every probability measure with finite second moment has a unique Fr\'echet mean, and the Fr\'echet mean map (see Theorem \ref{thm:main}) is $1$-Lipschitz with respect to $W_2$; see \citet[Proposition~4.3 and Theorem~6.3]{Stu03}. 

\subsection{Kendall's shape spaces}

    Consider \emph{configuration matrices} $X = (x_1,\ldots,x_k)$ of $k$ \emph{landmarks} $x_j \in \R^m$ ($j=1,\ldots,k$) in $m$ dimensions not all of them coinciding, modulo translation, scaling and rotation, yielding Kendall's shape space  {\citep{KBCL99}}
    $$ \Sigma_m^k := \left\{[X] : X\in \R^{m\times k} \setminus \{(x,\ldots,x) \in \R^{m\times k}: x \in  \R^m\}\right\}\,.$$
    Here 
    $$ [X] := \left\{Y \in \R^{m\times k}: g X P_k = \lambda Y P_k \mbox{ for some }\lambda > 0\mbox{ and }g\in SO(m)\right\}\,,$$
    with the centering projection $P_k := (I_k - \frac{1}{k}1_k1^T_k)$ where $I_k$ is the $k\times k$ unit matrix and $1_k\in\R^k$ has all coordinates equal one. 
    In case of $m=2<k$ these spaces can be given the manifold structure of complex projective spaces of complex dimension $k-2$ \citep[Section 6.4]{KBCL99}. This is a compact manifold, so Theorems \ref{thm:main}, \ref{thm:contamination} hold, as well as, Corollary \ref{cor:finite}.
    
    More generally, with the Euclidean norm $\|\cdot\|$ in matrix space, 
    $$Z_{\Sigma_m^k}([X],[Y]) := \mathop{\min}_{g\in  SO(m)} \left\|\frac{gXP_k}{\|XP_k\|} - \frac{YP_k}{\|YP_k\|}\right\|\quad \mbox{for }[X],[Y] \in \Sigma_m^k$$
    defines the \emph{Ziezold} metric on $\Sigma_m^k$, yielding the intrinsic metric $d_{\Sigma_m^k}:=2\arcsin(Z_{\Sigma_m^k}/2)$ \citep{H_means_means}.
    The following is well known, some of which is folklore.
    
    \begin{theorem}\label{thm:shape-spaces}
     In case of $3\le m< k-1$, $(\Sigma_m^k,d_{\Sigma_m^k})$ cannot be given a topological manifold structure. Moreover, for $2\le m\leq k-1$, $(\Sigma_m^k,d_{\Sigma_m^k})$ has the structure of a complete finite-dimensional noncontractible Alexandrov space with curvatures bounded from below by $1$.
    \end{theorem}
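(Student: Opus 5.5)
The proof splits along the two assertions. For the Alexandrov structure, the standard route is to realize $\Sigma_m^k$ as a quotient of the preshape sphere. After centering with $P_k$ and normalizing, configurations form the unit sphere $S:=\mathbb S^{m(k-1)-1}$ in $\R^{m\times(k-1)}$, using Helmert coordinates to identify $\R^{m\times k}P_k$ with $\R^{m\times(k-1)}$. The compact group $SO(m)$ acts isometrically on $S$ by left multiplication, and $\Sigma_m^k=S/SO(m)$.

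The quotient metric on $S/SO(m)$ is $\min_{g}d_S(gx,y)$. Here $d_S=2\arcsin(\|\cdot\|/2)$ in terms of chordal distance, and $2\arcsin(t/2)$ is monotone, so this quotient metric is exactly $d_{\Sigma_m^k}=2\arcsin(Z_{\Sigma_m^k}/2)$. I would then invoke the theorem that a quotient of an Alexandrov space of curvature $\ge\kappa$ by a compact group of isometries is again Alexandrov of curvature $\ge\kappa$ (Burago--Gromov--Perelman; \citet[Prop.\ 10.2.4]{BBI22}). Since $S$ has curvature $1$, this gives curvature $\ge1$. Completeness follows from compactness of the quotient. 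Finite dimensionality holds because the Hausdorff dimension is at most $\dim S$.

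Noncontractibility is the main obstacle. My first attempt would use the principal orbit type. For $m\le k-1$, the configurations whose centered matrix has rank $\ge m-1$ form an open dense set on which $SO(m)$ acts freely; for rank $\ge m-1$ the stabilizer in $SO(m)$ is trivial. The complement has codimension $\ge2$ in $S$ whenever $m\le k-1$. Rather than computing homology directly, I would aim for the following cleaner argument. By a transfer-type or Vietoris--Begle argument, the quotient map $S\to S/SO(m)$ induces an injection on rational cohomology, $H^*(S/SO(m);\mathbb Q)\cong H^*(S;\mathbb Q)^{SO(m)}$, and since $SO(m)$ is connected this equals $H^*(S;\mathbb Q)$. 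However, that is the statement for finite groups. For connected $G$ the correct tool is the Leray spectral sequence of $S\to S/G$, whose fibres $G/G_x$ are not acyclic, so this route needs care.

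The alternative I would actually pursue is a dimension count. If $X=S/SO(m)$ were contractible, then $X$ would be a compact Alexandrov space with curvature $\ge1>0$, of dimension $n=\dim S-\dim SO(m)$. By Perelman's stability and the generalized Bonnet--Myers / boundary theory, an Alexandrov space of positive curvature without boundary is not contractible. More precisely, such a space is a rational homology manifold in its top stratum, so its top $\mathbb Z_2$-homology is nontrivial. In dimension $n$, the fundamental class of a compact boundaryless Alexandrov space is nonzero by Grove--Petersen or Perelman. The remaining step is then to show that $S/SO(m)$ has empty boundary. The boundary of a quotient occurs exactly along codimension-one singular strata, that is, orbits with a reflection-type isotropy in the normal slice. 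For $m\le k-1$ the isotropy of the rank $m-1$ stratum is trivial and the singular strata have codimension $\ge2$, so there is no boundary. This boundary verification is the part I expect to be the most delicate.

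For the first claim, with $3\le m<k-1$, I would compute the space of directions at a maximally degenerate point, for instance a collinear configuration with rank $1$. Its isotropy is $SO(m-1)$, acting on the normal slice by a sum of standard representations. The link is therefore the quotient $\mathbb S^{N}/SO(m-1)$, where $N$ is the dimension of that slice representation. If $\Sigma_m^k$ were a topological manifold, this link would have to be a homology sphere of the right dimension, since local homology of a manifold is that of a sphere. I would show that for $m\ge3$ and $k-1>m$ the link's homology fails this, by the same orbit-space analysis. For $m=k-1$ the quotient is a topological sphere, which explains the excluded case.
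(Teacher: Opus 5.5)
Your Alexandrov-structure argument is the paper's (preshape sphere modulo $SO(m)$, \citet[Prop.~10.2.4]{BBI22}). The nonmanifold assertion, however, has a genuine gap. The paper simply cites \citet[Theorem 4.6]{KBCL99}. You stop at ``I would show \ldots\ by the same orbit-space analysis'', and at the point you chose, that analysis cannot finish the job.

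Take a rank-one $X$ with column space $V\subset\R^m$ and row space $U\subset\R^{k-1}$. The slice is $(V\otimes U^\perp)\oplus(V^\perp\otimes U^\perp)\cong\R^{k-2}\oplus\R^{(m-1)\times(k-2)}$, with $SO(m-1)$ acting trivially on the first summand and by left multiplication on the second. Hence the link is the join $\mathbb S^{k-3}*\Sigma_{m-1}^{k-1}$, and $[X]$ is a homology-manifold point iff $\Sigma_{m-1}^{k-1}$ has the homology of a sphere. For $m=3$ this space is $\mathbb{CP}^{k-3}$ with $k-3\ge2$, and you are done. For $m\ge4$, however, it is a global homology question about a shape space of the same type, i.e.\ exactly the computation of \citet{KBCL99} you wanted to avoid.

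The fix is to use a configuration of rank $m-2$, which exists because $m\ge3$. Its isotropy is $SO(2)$ and the nontrivial slice summand is $\R^{2\times(k-m+1)}$. By the slice theorem, a neighbourhood of $[X]$ is homeomorphic to $\R^a\times C(\mathbb{CP}^{k-m})$ for some $a$. Since $k-m\ge2$, $\mathbb{CP}^{k-m}$ has nonzero reduced homology in degrees $2$ and $2(k-m)\ge4$. So the local homology at $[X]$ is nonzero in two different degrees, which is impossible in a manifold. For $m=k-1$ the factor is $\mathbb{CP}^1\cong\mathbb S^2$, which explains the exceptional case.

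For noncontractibility you take a genuinely different route from the paper, which quotes the nontrivial homology groups from \citet[Theorem 4.5]{KBCL99}. Your route uses the fact that a compact $n$-dimensional Alexandrov space without boundary has $H_n(\cdot;\mathbb Z_2)\neq0$, so it suffices to show $\partial\Sigma_m^k=\emptyset$. This is sound, positive curvature plays no role, and it avoids any homology computation. Three things need fixing, though:
\begin{itemize}
\item Discard the transfer formula. It fails for connected groups, e.g.\ $\mathbb S^3/\mathbb S^1=\mathbb S^2$.
\item Discard the ``rational homology manifold'' sentence, which does not yield the $\mathbb Z_2$ statement.
\item Correct the boundary criterion. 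Codimension $\ge2$ in $S$ does not exclude boundary: for $m=k$ the rank-$(m-2)$ set also has codimension $2$ in $S$, yet $\Sigma_k^k$ has boundary. What matters is the codimension in the quotient, $\operatorname{codim}_S-\dim G_X$. For the rank-$(m-j)$ stratum, $j\ge2$, this equals $j(k-1-m)+j(j+1)/2$, which is $\ge3$ for $m\le k-1$ (and equals $1$ for $j=2$, $m=k$).
\end{itemize}
With that corrected count, the argument goes through for all $2\le m\le k-1$.
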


    \begin{proof}
     The nonmanifold assertion is from \citet[Theorem 4.6]{KBCL99}. Since $\Sigma_m^k$ can be viewed as a quotient of $SO(m)$ acting on a unit sphere in matrix space, it is a complete finite-dimensional Alexandrov space with curvatures bounded from below by $1$ \citep[Proposition 10.2.4]{BBI22}. It is noncontractible because it has nontrivial homology groups \citep[Theorem 4.5]{KBCL99}. 
    \end{proof}

    Note that $\Sigma_m^{m+1}$ is homeomorphic to a sphere \citep{Le91}.

\subsection{Other exponents}
For $1<p<\infty$, $\mu \in \Pcal(Q)$ and $x\in Q$, set
$$
 F_{\mu,p}(x):=\int_Qd(x,z)^p\,d\mu(z)\,,$$
 and 
$$\Pcal_p(Q):=\{\mu \in \Pcal(Q): F_{\mu,p}(x) <\infty \mbox{ for all }x\in Q\}\,.
$$
Then, the 
$$B_p(\mu):=\argmin_{x\in Q}F_{\mu,p}(x)$$
are the nearest-point sets for the Dirac embedding $\delta:Q\hookrightarrow \Pcal_p(Q)$,
using the metric
$$  W_p(\mu,\nu):=\left(\inf_{\pi\in\Pi(\mu,\nu)}\int_{Q\times Q}d(z,w)^p\,d\pi(z,w)\right)^{\frac{1}{p}}\,,\quad \mu,\nu \in \Pcal_p(Q)\,.$$

\begin{theorem}\label{thm:otherp}
On a complete noncontractible  Riemannian manifold $Q$ with intrinsic distance $d$, Theorems~\ref{thm:main} and~\ref{thm:contamination} and Corollaries~\ref{cor:thmB-4-compact-manifolds} and \ref{cor:finite} hold with 
$$ \Ucal:=\{\mu\in \Pcal_p(Q):B_p(\mu)\text{ is a singleton}\},
 \qquad \Ncal:=\Pcal_p(Q)\setminus\Ucal\,,$$
and $W_2,B$ replaced by $W_p,B_p$, respectively, for every $1<p<\infty$.
\end{theorem}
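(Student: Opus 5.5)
We re-run the proofs of Section~\ref{sec:genericity} with $F_\mu$, $W_2$, $B$ replaced by $F_{\mu,p}$, $W_p$, $B_p$, and check which ingredients depend on $p=2$. There are four: existence of means, continuity of the unique-mean map (Lemma~\ref{lem:unique-means}), the atom lemma (Lemma~\ref{lem:atom}), and the elementary estimates. Lemma~\ref{lem:homotopy} and the topological input (contractible neighbourhoods, a non-null-homotopic $u:\mathbb S^k\to Q$) involve no exponent and carry over verbatim. A complete Riemannian manifold is Heine--Borel by Hopf--Rinow, and every point has a contractible neighbourhood (a geodesic ball).

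\emph{Existence and continuity.} From $W_p(\delta_x,\delta_y)=d(x,y)$ and $W_p(\mu,\delta_x)=F_{\mu,p}(x)^{1/p}$ (the product is the only coupling) we get
\[
d(y,x)\le F_{\mu,p}(y)^{1/p}+F_{\mu,p}(x)^{1/p},
\]
so minimizing sequences stay bounded and $B_p(\mu)\neq\emptyset$. The triangle inequality for $W_p$ also gives
\[
\bigl|F_{\nu,p}(x)^{1/p}-F_{\tau,p}(x)^{1/p}\bigr|\le W_p(\nu,\tau),
\]
so the proof of Lemma~\ref{lem:unique-means} goes through word for word in $W_p$.

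\emph{Atom lemma (main obstacle).} We need that if $x\in B_p(\mu)$ and $\mu(\{y\})>0$, then $y\notin\mathrm{Cut}(x)$, so that $x$ and $y$ are joined by a unique minimizing geodesic. Lemma~\ref{lem:atom} is only cited for $p=2$, so this is the genuinely new step; we argue via the Le--Barden argument. Suppose $y\in\mathrm{Cut}(x)$, $y\ne x$. Then either there are two minimizing geodesics from $y$ to $x$, with distinct initial directions $v_1\neq v_2$ at $x$, or $y$ is conjugate along a minimizing geodesic.

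In the first case, $r(\cdot)=d(\cdot,y)$ satisfies, for every unit $w$,
\[
r(\exp_x tw)\le r(x)-t\max_i\langle w,v_i\rangle+o(t),
\]
where the $v_i$ are the unit directions at $x$ pointing towards $y$. Hence $r^p$ has one-sided directional derivative at most $-p\,r^{p-1}\max_i\langle w,v_i\rangle$. This function is not of the form $\langle w,g\rangle$ but is strictly smaller than every such supporting linear form in some direction: it is a "concave kink". The remaining integrand $\int_{Q\setminus\{y\}} d(\cdot,z)^p\,d\mu$ is locally semiconcave on a Riemannian manifold (distance functions are semiconcave away from the base point, and $s\mapsto s^p$ with $p>1$ keeps $d^p$ semiconcave near $z$ too, uniformly on compacts). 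Its directional derivative at $x$ is therefore bounded above by a superdifferential linear form $\langle w,g\rangle$. Adding the atom's kink, weighted by $\mu(\{y\})>0$, yields a direction $w$ with negative one-sided derivative of $F_{\mu,p}$ at $x$, contradicting minimality.

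In the conjugate-point case, the same second-variation argument as in Le--Barden applies, because $s\mapsto s^p$ is smooth and strictly increasing for $s>0$: conjugacy makes the Hessian of $d(\cdot,y)$ tend to $-\infty$ in some direction, which the semiconcave bound on the rest cannot compensate. Alternatively, one notes that Theorem~7.6 of \cite{LH25} is, as we expect, proved for general exponents in the Riemannian case and cites it. The delicate point is the uniform semiconcavity of $z\mapsto d(\cdot,z)^p$ for $1<p<2$ near $z=x$. Here $d(\cdot,z)^p$ is still $C^1$ with vanishing gradient at $z$ and bounded above by a quadratic, which suffices for the superdifferential bound.

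\emph{Estimates.} With these ingredients the proof of Theorem~\ref{thm:contamination} carries over, using
\[
a:=\inf_{x\notin C}\bigl(F_{\mu,p}(x)-F_{\mu,p}(\bar\mu)\bigr)>0,\qquad \varepsilon<\frac{a}{a+R^p},
\]
and
\[
W_p(\mu_{\varepsilon,y},\mu_{\varepsilon,z})\le\varepsilon^{1/p}d(y,z),\qquad
W_p(\mu_{\varepsilon,y},\mu)^p\le\varepsilon F_{\mu,p}(y),\qquad
d_{\mathrm{TV}}(\mu_{\varepsilon,y},\mu)\le\varepsilon.
\]
Theorem~\ref{thm:pre-Ucal-dense} for $p$ follows as in \cite{HEH26}. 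Writing $\nu=\mu_{\varepsilon,y}$ with $y\in B_p(\mu)$,
\[
F_{\nu,p}(x)=(1-\varepsilon)F_{\mu,p}(x)+\varepsilon\,d(x,y)^p,
\]
and $y$ minimizes both summands, the second strictly. Hence $B_p(\nu)=\{y\}$, and the argument of Theorem~\ref{thm:main} follows.

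Corollary~\ref{cor:finite} follows from density of finitely supported measures in $W_p$ \citep[Theorem 6.18]{V08}. Corollary~\ref{cor:thmB-4-compact-manifolds} follows because closed manifolds are noncontractible.
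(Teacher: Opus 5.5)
Your reduction is the paper's: its proof of Theorem~\ref{thm:otherp} consists of the four estimates you list, plus the statement that Lemma~\ref{lem:atom} holds for every $p>1$, taken from \citet[Section~7]{LH25}. That is your ``alternative''. Your $p$-version of Theorem~\ref{thm:pre-Ucal-dense}, which the paper leaves implicit, is fine. The self-contained proof of the atom lemma that you offer instead does not work as written for $1<p<2$, in two places.

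\textbf{The case of two minimizing geodesics.} The remainder $G(x')=\int_{Q\setminus\{y\}}d(x',z)^p\,d\mu(z)$ is not semiconcave when $p<2$. Near $z$, $d(\cdot,z)^p$ has Hessian of order $d(\cdot,z)^{p-2}\to\infty$, so it is not bounded above by a quadratic. As soon as $\mu$ charges $x$ or accumulates there, $G$ has no semiconcavity constant. You only need first-order information, though:
\begin{itemize}
\item For unit $w$ and $0<t\le 1$, the quotients $t^{-1}(d(\exp_x tw,z)^p-d(x,z)^p)$ are bounded above by $p(d(x,z)+1)^{p-1}$, which is $\mu$-integrable since $\mu\in\Pcal_p(Q)$.
\item Their $\limsup$ is at most $\phi_z(w):=-p\,d(x,z)^{p-1}\max_{v\in V_z}\langle w,v\rangle$, where $V_z$ is the set of unit initial directions of minimizing geodesics from $x$ to $z$, and $\phi_x:=0$.
\item Reverse Fatou and concavity of $\phi_z$ give, for the upper directional derivatives, $D^+_wG(x)+D^+_{-w}G(x)\le\int(\phi_z(w)+\phi_z(-w))\,d\mu(z)\le 0$.
\item The atom contributes at most $-p\,\mu(\{y\})\,d(x,y)^{p-1}|\langle w,v_1-v_2\rangle|$ to $D^+_wF_{\mu,p}(x)+D^+_{-w}F_{\mu,p}(x)$.
\end{itemize}
Choosing $w$ with $\langle w,v_1-v_2\rangle\neq 0$ contradicts minimality. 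This is all that Lemma~\ref{lem:homotopy} needs.

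\textbf{The conjugate-point case.} Drop it. Lemma~\ref{lem:atom} only asserts a unique minimizing geodesic, not $y\notin\mathrm{Cut}(x)$. Your argument there needs an $O(t^2)$ bound on second differences of $G$, and that fails for $p<2$: an atom at $x$ alone contributes $2\mu(\{x\})t^p$. The stronger claim is in fact false in general. At a cusp of the conjugate locus (the generic endpoint of a cut locus on a surface), $d(\cdot,y)$ drops below its linearization only at order $t^{4/3}$ in the transversal direction. One checks that for $p<4/3$ you can take a heavy atom at $x$ and balance it to first order with a light atom just beyond $x$ on the geodesic. Then $x$ is the unique $p$-mean, although the atom $y$ lies in $\mathrm{Cut}(x)$.

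With these corrections, or by simply citing \citet{LH25} as the paper does, the rest of your argument coincides with the paper's.
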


\begin{proof}
Lemma~\ref{lem:atom} holds for every $p>1$ by \citet[Section~7, ``Exponents different from two'']{LH25}. Then all arguments carry through, using the estimates
\begin{align*}
 d(y,\bar \nu) &= W_p(\delta_y,\delta_{\bar \nu}) \leq (F_{\nu,p}(y))^{\frac{1}{p}} + (F_{\nu,p}(\bar \nu))^{\frac{1}{p}},\\
  \left|(F_{\nu,p}(x))^{\frac{1}{p}} - (F_{\tau,p}(x))^{\frac{1}{p}}\right| &\leq W_p(\nu,\tau)\,,\\
 W^p_p(\mu_{\varepsilon,y},\mu)&\le \varepsilon F_{\mu,p}(y),\\
 W_p(\mu_{\varepsilon,y},\mu_{\varepsilon,z})&\le\varepsilon^{1/p}d(y,z).
\end{align*}
\end{proof}
Note that the total variation bounds do not depend on $p$.

For $p=1$ Lemma~\ref{lem:atom} fails. On the unit sphere $M=\mathbb{S}^n$, $n \in \mathbb N$, for fixed $y\in \mathbb{S}^n$ the measure $\eta_\varepsilon=(1-\varepsilon)\delta_y+\varepsilon\delta_{-y}$ has the unique median $y$ for $0<\varepsilon<1/2$, since
\[
 F_{\eta_\varepsilon,1}(x)-F_{\eta_\varepsilon,1}(y)
 \ge(1-2\varepsilon)d(x,y).
\]
Nevertheless, its antipodal atom $-y$ is joined to $y$ by more than one minimizing geodesic.

\section*{Acknowledgements}

Both authors acknowledge a fruitful stay at the Hausdorff Research Institute for Mathematics in Bonn supported by the Deutsche Forschungsgemeinschaft (DFG, German Research Foundation) under Germany's Excellence Strategy -- EXC-2047/1 -- 390685813.

This paper arose in close collaboration with ChatGPT, an AI assistant developed by OpenAI. The collaboration involved exploring proof strategies, refining arguments, locating references, and drafting. 
Revision and writing of final text have been done by the authors.

\end{document}